\newcommand{\Rz}{\mathbb{R}}
\newcommand{\Zz}{\mathbb{Z}}
\newcommand{\Cz}{\mathbb{C}}
\newcommand{\hyper}[4]{\left(\begin{matrix} #1 \\ #2 \end{matrix}; #3, #4\right)}
\theoremstyle{plain}
\newtheorem{thm}{Theorem}
\newtheorem{lem}{Lemma}
\newtheorem{rmk}[thm]{Remark}
\newtheorem{coro}{Corollary}
\begin{document}
\title{$q$-difference equations satisfied by the universal mock theta functions}
\author{Satoshi Tsuchimi}
\date{}

\maketitle

\abstract{In this paper, we give fundamental solutions of some $q$-difference equations satisfied by the universal mock theta functions and the higher level Appell functions. As an application, we provide an alternative proof of the representation formulas of the universal mock theta functions and the higher level Appell functions using Zwegers' $\mu$-function.
}
\section{Introduction}
Throughout this paper, 
let $q\in\Cz^\times$ be a complex number $|q|<1$. We define the $q$-shifted factorials and Jacobi's theta function as follows:
\begin{align*}
(x)_{\infty }
   &=(x;q)_{\infty }:=\prod_{j=0}^{\infty }(1-xq^{j}), \quad 
   (x)_{\nu}=(x;q)_{\nu}:=\frac{(x;q)_{\infty }}{(q^{\nu}x;q)_{\infty }} \quad (\nu \in \Cz),\\
\theta_q(x)
   &:=\sum_{n\in\Zz}x^nq^\frac{n(n-1)}{2}=(q,-x,-q/x;q)_\infty.
\end{align*}
From a simple calculation, we have
\begin{align}
\label{theta trans}
x^nq^\frac{n(n-1)}{2}\theta_q(xq^n)=\theta_q(x)\quad (n\in\Zz).
\end{align}
For appropriate complex numbers $a_1,\ldots,a_r,b_1,\ldots,b_s,x$, we define the $q$-hypergeometric series as follows:
\begin{align*}
{}_r\phi_s\hyper{a_1,\ldots,a_r}{b_1,\ldots,b_s}{q}{x}:=\sum_{n=0}^\infty\frac{(a_1,\ldots,a_r)_n}{(b_1,\ldots,b_s,q)_n}\left((-1)^nq^\frac{n(n-1)}{2}\right)^{s-r+1}x^n,
\end{align*}
where $(a_1,\ldots,a_r)_\nu:=(a_1)_\nu\cdots (a_r)_\nu$. 

Mock theta functions first appeared in Ramanujan's last letter to Hardy in 1920. 
%
Ramanujan presented several identities for the mock theta functions, later these identities implied that all mock theta functions are expressed as specializations of the universal mock theta functions  
\begin{align}
g_2(x;q)&:=\sum_{n=0}^\infty\frac{(-q)_nq^\frac{n(n+1)}{2}}{(x,x^{-1}q)_{n+1}}, \\
g_3(x;q)&:=\sum_{n=0}^\infty\frac{q^{n(n+1)}}{(x,x^{-1}q)_{n+1}}.
\end{align}
Namely, the universal mock theta functions are kind of master functions of the mock theta functions. 

In $2009$, Kang presented a following expression for the universal mock theta functions using Zwegers' $\mu$-function \cite[Theorem 1.1 and Theorem 3.1]{Ka}. 
\label{K}
\begin{align}
\label{K}
g_2(x;q)&=-iq^{-\frac{1}{4}}\mu(x^2,q;q^2)+\frac{(q^2;q^2)_\infty^4}{(q;q)_\infty^2\theta_{q^2}(-x^2)},\nonumber\\
g_3(x;q)&=-ix^{-\frac{1}{2}}q^{-\frac{1}{8}}\mu(x^3,q;q^2)-ix^\frac{1}{2}q^{-\frac{5}{8}}\mu(x^3,q^2;q^3)+\frac{(q^3;q^3)_\infty^3}{(q;q)_\infty\theta_{q^3}(-x^3)}.
\end{align}
Here the $\mu$-function is defined by Zwegers as follows \cite{Zw1}: 
\begin{align}
\label{mu}
\mu(x,y;q)=\mu(x,y):=iq^{-\frac{1}{8}}\frac{\sqrt{xy}}{\theta_q(-y)}\sum_{n\in\Zz}\frac{(-1)^ny^nq^\frac{n(n+1)}{2}}{1-xq^n},
\end{align}
where $x,y\neq q^k,(k\in\Zz)$. 
More generally, the universal mock theta functions are a special class of the higher level Appell functions
\begin{align*}
A_m(x,y):=x^\frac{m}{2}\sum_{n\in\Zz}\frac{(-1)^{mn}y^nq^\frac{mn(n+1)}{2}}{1-xq^n}
\end{align*}
introduced by Kac-Wakimoto \cite{KW}. From a simple calculation, we have
\[
A_1(x,y)=-iq^\frac{1}{8}\frac{\theta_q(-y)}{\sqrt{y}}\mu(x,y). 
\]
Zwegers showed the following expression of the function $A_m(x,y)$ by the $\mu$-function \cite[Lemma 2]{Zw2}: 
\begin{align}
\label{Z}
A_m(x,(-1)^{m-1}y)=\sum_{k=0}^{m-1}-iq^\frac{m}{8}x^k\frac{\theta_{q^m}(-yq^k)}{\sqrt{yq^k}}\mu(x^m,yq^k;q^m).
\end{align}

The $\mu$-function has many rich properties.  As a particularly important property, the $\mu$-function satisfies  a transformation law like Jacobi forms by adding an appropriate non-holomorphic function to the $\mu$-function.  
This result was a starting point in the study of mock modular forms. 

Recently, there are some works in \cite{GW} and \cite{ST} to understand Zwegers' $\mu$-function in terms of $q$-special functions. 
In the paper \cite{ST}, we studied the $q$-Hermite-Weber equation
\begin{align}
\label{H}
[T_x^2-(1-xq)\sqrt{a}T_x-xq]f(x)=0.
\end{align}
In particular, when $a=q$, the $q$-Hermite-Weber equation has the $\mu$-function as a solution. We also showed that  a solution for general $\alpha$ has the same rich symmetry as the $\mu$-function.
In this paper, we describe that the $\mu$-function appears as a fundamental solution of higher-order $q$-difference equation 
. 

First we present a theorem for the following higher order $q$-difference equation
\begin{align}
\label{linear eq}
\left[\prod_{k=1}^{m-1}(T_x-q^{\alpha_k})\right](T_x+xq^\alpha)f(x)=0,
\end{align}
where $\alpha_1,\ldots \alpha_{m-1}$ are different complex parameters.
When $m=2,\alpha=\alpha_1=1/2$, this $q$-difference equation corresponds with the $q$-Hermite-Weber equation in the case of $a=q$
\begin{align*}
\left[T_x^2-(1-xq)\sqrt{q}T_x-xq\right]f(x)=\left(T_x-q^\frac{1}{2}\right)\left(T_x+xq^\frac{1}{2}\right)f(x).
\end{align*}

\begin{thm}
\label{mu eq}
 The $q$-difference equation $(\ref{linear eq})$ has the following fundamental solutions:
\begin{align}
\label{basic solution}
\frac{1}{\theta_q(-xq^\alpha)},\quad x^{\alpha+\alpha_j-\frac{1}{2}}\mu(x\lambda q^\alpha,\lambda q^{\alpha_j};q)\quad (j=1,\ldots,m-1). 
\end{align}
\end{thm}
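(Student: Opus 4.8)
The plan is to read the left-hand side of $(\ref{linear eq})$ as the composite of the first-order operator $R:=T_x+xq^\alpha$ (applied first) with the constant-coefficient operator $P:=\prod_{k=1}^{m-1}(T_x-q^{\alpha_k})$, where $T_x$ is the $q$-shift $T_xf(x)=f(qx)$, so that $T_x\cdot x=qx\cdot T_x$ as operators (this is exactly the factorization displayed for the $q$-Hermite--Weber case). Since the factors of $P$ are polynomials in $T_x$, they commute with one another, and $P$ annihilates precisely the monomials $x^{\alpha_k}$ because $(T_x-q^{\alpha_k})x^{\alpha_k}=(q^{\alpha_k}-q^{\alpha_k})x^{\alpha_k}=0$. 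Hence it suffices to prove two local facts: that $R$ kills $f_0:=1/\theta_q(-xq^\alpha)$, and that $R$ maps each $f_j:=x^{\alpha+\alpha_j-\frac12}\mu(x\lambda q^\alpha,\lambda q^{\alpha_j};q)$ to a constant multiple of the single monomial $x^{\alpha_j}$. The first fact gives $Rf_0=0$, so $f_0$ solves $(\ref{linear eq})$; the second gives $Rf_j=c_j\,x^{\alpha_j}$, and then $P\,c_jx^{\alpha_j}=0$ since the commuting factor $(T_x-q^{\alpha_j})$ already annihilates $x^{\alpha_j}$.

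The first fact is immediate from the quasi-periodicity $(\ref{theta trans})$: taking $n=1$ gives $\theta_q(xq)=\theta_q(x)/x$, whence $\theta_q(-xq^{\alpha+1})=-\theta_q(-xq^\alpha)/(xq^\alpha)$ and therefore $R\,f_0=\tfrac{1}{\theta_q(-xq^{\alpha+1})}+\tfrac{xq^\alpha}{\theta_q(-xq^\alpha)}=0$.

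The crux is the second fact, for which I would first extract a first-order $q$-shift relation for $\mu$ directly from the defining series $(\ref{mu})$. Writing $h(x,y):=\sum_{n}\frac{(-1)^ny^nq^{n(n+1)/2}}{1-xq^n}$, a reindexing $n\mapsto n-1$ gives $h(qx,y)=-y^{-1}\sum_n\frac{(-1)^ny^nq^{n(n-1)/2}}{1-xq^n}$, and then the elementary identity $\frac{x}{1-xq^n}=\frac{q^{-n}}{1-xq^n}-q^{-n}$ together with $\sum_n(-1)^ny^nq^{n(n-1)/2}=\theta_q(-y)$ identifies this sum as $x\,h(x,y)+\theta_q(-y)$. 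Multiplying through by the prefactor $iq^{-1/8}\sqrt{xy}/\theta_q(-y)$ yields
\begin{align*}
\mu(qx,y)=-\frac{\sqrt q\,x}{y}\,\mu(x,y)-iq^{-1/8}\sqrt q\,\sqrt{x/y}.
\end{align*}
Substituting this into $Rf_j$ (with the first argument shifted $x\lambda q^\alpha\mapsto q\cdot x\lambda q^\alpha$) produces one term proportional to $\mu(x\lambda q^\alpha,\lambda q^{\alpha_j})$ and one pure power of $x$; the role of the precise exponent of $x$ and of the arguments of $\mu$ is exactly to force the $\mu$-coefficient to vanish, leaving $Rf_j$ equal to a constant times $x^{\alpha_j}$. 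Verifying this cancellation, and that the surviving exponent is \emph{precisely} $\alpha_j$ (so that it is killed by the $j$-th factor of $P$), is the delicate bookkeeping step, and is where I would recheck all half-integer and $q$-power factors against the statement; this is the main obstacle, since the whole argument collapses unless the $\mu$-part cancels and the leftover exponent lands on $\alpha_j$.

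Finally, to upgrade ``solutions'' to ``fundamental solutions'' I would check that the $m$ functions in $(\ref{basic solution})$ are linearly independent over the field of $q$-constants (functions fixed by $T_x$). The cleanest route is a comparison of characteristic exponents at $x=0$: since $\mu(z,y)\sim(\mathrm{const})\sqrt z$ as $z\to0$, each $f_j$ has pure leading behaviour $\mathrm{const}\cdot x^{\alpha_j}$, and these exponents are the indicial roots of $(\ref{linear eq})$ coming from $P$, pairwise distinct once the $\alpha_k$ are distinct, while $f_0$ is of theta-quotient type and not a pure power. This separation of exponents forces linear independence, and since an order-$m$ linear $q$-difference equation has an $m$-dimensional solution space, the $m$ functions then constitute a fundamental system; alternatively one may compute the associated $q$-Casoratian determinant and check it is not identically zero. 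This last step is routine compared with the cancellation in the crux.
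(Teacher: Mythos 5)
Your strategy is genuinely different from the paper's, and in principle it is a more elementary route. The paper never verifies the solutions directly: it specializes its Lemma \ref{diver} to $\beta_k=\alpha_k-1$, obtains $1/\theta_q(-x)$ together with the \emph{divergent} formal series $x^{\alpha_j}{}_2\phi_0\hyper{q,0}{-}{q}{xq^{-\alpha_j-1}}$ as solutions of the $\alpha=0$ equation, resums the divergent series by $\mathcal{L}^+\circ\mathcal{B}^+$ (this is where the $\mu$-function appears, as the Borel--Laplace sum), and finally substitutes $x\mapsto xq^\alpha$. Your factorization into $P\circ R$, the verification $Rf_0=0$, and your derivation of the shift law $\mu(qx,y)=-\tfrac{x}{y}q^{\frac{1}{2}}\mu(x,y)-iq^{\frac{3}{8}}\sqrt{x/y}$ are all correct; the latter is exactly the translation formula the paper quotes from Zwegers in its proof of the equation (\ref{Z}).

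However, there is a genuine gap exactly at the point you label ``delicate bookkeeping'' and defer: the claim that $Rf_j$ is a constant times $x^{\alpha_j}$. Carrying out the computation with $f=x^{c}\mu(x\lambda q^\alpha,\lambda q^{\alpha_j};q)$, your shift law gives
\begin{align*}
(T_x + xq^\alpha)\,x^{c}\mu(x\lambda q^\alpha,\lambda q^{\alpha_j};q)
  &=\left(q^{\alpha}-q^{c+\alpha-\alpha_j+\frac{1}{2}}\right)x^{c+1}\mu(x\lambda q^\alpha,\lambda q^{\alpha_j};q)\\
  &\qquad-iq^{c+\frac{3}{8}+\frac{\alpha-\alpha_j}{2}}\,x^{c+\frac{1}{2}},
\end{align*}
so the $\mu$-term cancels if and only if $c=\alpha_j-\frac{1}{2}$, in which case the remainder is a constant times $x^{\alpha_j}$ and is killed by the factor $(T_x-q^{\alpha_j})$ of $P$. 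For the exponent actually printed in the statement, $c=\alpha+\alpha_j-\frac{1}{2}$, the coefficient of the $\mu$-term is $q^{\alpha}-q^{2\alpha}\neq 0$ (unless $q^{\alpha}=1$), and the leftover power $x^{\alpha+\alpha_j}$ is not annihilated by $P$ in general: the functions as printed are \emph{not} solutions. So your ``second fact'' is false as stated, and what your computation would actually reveal is that the statement carries a typo --- the prefactor should be $x^{\alpha_j-\frac{1}{2}}$, i.e.\ the solutions are constant multiples of $(xq^\alpha)^{\alpha_j-\frac{1}{2}}\mu(x\lambda q^\alpha,\lambda q^{\alpha_j};q)$. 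This is consistent with the paper's own proof (substituting $x\mapsto xq^\alpha$ into $x^{\alpha_j-\frac{1}{2}}\mu(x\lambda,\lambda q^{\alpha_j};q)$ multiplies it by the constant $q^{\alpha(\alpha_j-\frac{1}{2})}$ and shifts only the $\mu$-argument, leaving the exponent $\alpha_j-\frac{1}{2}$) and with the later applications, where the solutions used for $G_m$ are $x^{k-\frac{m}{2}}\mu(x^m\lambda,y\lambda q^k;q^m)$. A complete proof along your lines must include this computation rather than assert its outcome; note also that your linear-independence sketch tacitly uses the corrected exponent, since $f_j\sim \mathrm{const}\cdot x^{\alpha_j}$ near $x=0$ requires the prefactor $x^{\alpha_j-\frac{1}{2}}$.
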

Note that the theta function $1/\theta_q(-x)$ is a solution of a first order $q$-difference equation $(T_x+x)f(x)=0$, and the $\mu$-function $x^{\alpha_j-\frac{1}{2}}\mu(x\lambda,\lambda q^{\alpha_j};q)$ is a solution of a second order factorized $q$-difference equation $(T_x-q^{\alpha_j})(T_x+x)f(x)=0$. 
From this result, we see that the $\mu$-function is a ``mock version'' of Jacobi's theta function. 
In particular, we see that the $\mu$-function is a fundamental and important function in the $q$-difference equation, since the $\mu$-function naturally appears as a solution of higher order factorized $q$-difference equation from Theorem $\ref{mu eq}$. 

 
Next, we study the $q$-difference equations satisfied by the higher level Appell functions. 
 \begin{thm}
 \label{Appell eq}
Let $G_m(x,y):=x^{-\frac{m}{2}}A_m(x,(-1)^{m-1}y)$. The functions $G_m$ satisfy the following $m+1$-th order homogeneous $q$-difference equations: 
\begin{align}
\label{A}
\left[\prod_{k=1}^{m}(T_x-q^{k-1})\right](T_x+x^my^{-1})G_m(x,y)=0. 
\end{align}
\end{thm}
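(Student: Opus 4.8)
The plan is to apply the two pieces of the operator in \eqref{A} in turn: first the single factor $(T_x+x^m y^{-1})$, which I expect to collapse the Appell-type series into an honest polynomial in $x$, and then the product $\prod_{k=1}^m(T_x-q^{k-1})$, which I claim annihilates every monomial that can appear. Throughout I use that $T_x$ is the $q$-dilation $T_x f(x)=f(qx)$, the convention under which the displayed factorisation of the $q$-Hermite--Weber operator holds; in particular $T_x\frac{1}{1-xq^n}=\frac{1}{1-xq^{n+1}}$ and $T_x x^j=q^j x^j$. As a preliminary normalisation, since $(-1)^{mn}\bigl((-1)^{m-1}\bigr)^n=(-1)^{(2m-1)n}=(-1)^n$, the definition of $A_m$ gives the transparent form
\begin{align*}
G_m(x,y)=x^{-\frac m2}A_m(x,(-1)^{m-1}y)=\sum_{n\in\Zz}\frac{(-1)^n y^n q^{mn(n+1)/2}}{1-xq^n}.
\end{align*}

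Next I would act by $(T_x+x^m y^{-1})$ term by term. Reindexing the shifted sum $T_x G_m$ by $n\mapsto n-1$ and using $q^{mn(n+1)/2}=q^{mn(n-1)/2}\cdot q^{mn}$, the two contributions combine over the common denominator $1-xq^n$ with numerator proportional to $(xq^n)^m-1$. The decisive simplification is the telescoping factorisation
\begin{align*}
\frac{(xq^n)^m-1}{1-xq^n}=-\sum_{j=0}^{m-1}(xq^n)^j,
\end{align*}
which cancels the pole entirely. Collecting powers of $x$ and recognising the resulting $n$-sum as a theta function in base $q^m$ yields
\begin{align*}
(T_x+x^m y^{-1})G_m(x,y)=-y^{-1}\sum_{j=0}^{m-1}x^j\,\theta_{q^m}(-yq^j),
\end{align*}
a polynomial in $x$ of degree at most $m-1$ whose coefficients are independent of $x$.

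Finally I would finish by a spectral observation. Because $T_x x^j=q^j x^j$, the monomial $x^j$ is killed by the factor $(T_x-q^j)$. As $k$ runs over $1,\dots,m$ the exponent $k-1$ runs over $0,\dots,m-1$, so $\prod_{k=1}^m(T_x-q^{k-1})$ annihilates each of $x^0,\dots,x^{m-1}$; since these factors are polynomials in $T_x$ they commute, and hence the product annihilates the whole polynomial $(T_x+x^m y^{-1})G_m$. Composing the two stages gives $\prod_{k=1}^m(T_x-q^{k-1})(T_x+x^m y^{-1})G_m(x,y)=0$, which is exactly \eqref{A}.

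The genuinely delicate step is the second one: the pole at $x=q^{-n}$ must cancel exactly, and this hinges on matching the $q$-exponents after the index shift so that the surviving coefficient is precisely $(xq^n)^m-1$ and the bilateral $n$-sum assembles into $\theta_{q^m}(-yq^j)$. Convergence of the series and the legitimacy of the term-by-term reindexing are routine for $|q|<1$ and should not present a real obstacle.
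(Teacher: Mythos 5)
Your proposal is correct and follows essentially the same route as the paper: your computation of $(T_x+x^my^{-1})G_m(x,y)=-y^{-1}\sum_{j=0}^{m-1}x^j\theta_{q^m}(-yq^j)$ via the index shift and the finite geometric sum is exactly the paper's Lemma \ref{Am quasi-perio} (the pseudo-periodicity of $G_m$), and the final step of annihilating the degree-$(m-1)$ polynomial by $\prod_{k=1}^m(T_x-q^{k-1})$ is the paper's proof of Theorem \ref{Appell eq}. The only difference is organizational: the paper states the pseudo-periodicity as a standalone lemma, since it is reused later in the alternative proof of equation (\ref{Z}).
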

Using Theorem $\ref{mu eq}$ and Theorem $\ref{Appell eq}$, we show that the higher level Appell functions are expressed in terms of the $\mu$-function, and give an alternative proof of the equations (\ref{Z}).  
With this proof, we provide a clear answer through the $q$-difference equations to the question why the higher level Appell functions and the universal mock theta functions are represented by the $\mu$-function (cf. Remark \ref{rmk2}). 

This paper is organized as follows. First, in Section \ref{section 2}, we present a lemma to show the main results. Next, in Section \ref{section 4}, we prove the main results Theorem \ref{mu eq} and Theorem \ref{Appell eq}. As an application of the main results, we give another proof of the equation (\ref{Z}) from the view point of $q$-difference equations. 



\section{A lemma for the proof of main results}\label{section 2}
We define the $q$-Borel and $q$-Laplace transforms $\mathcal{B}^\pm, \mathcal{L}^\pm$ for a formal power series $f(x)=\sum_{n\geq0}A_nx^n\in\Cz\llbracket x\rrbracket$ following \cite{RSZ}: 
\begin{align*}
\mathcal{B}^\pm(f)(\xi):=\sum_{n\geq0}A_nq^{\pm\frac{n(n-1)}{2}}\xi^n,
\end{align*}
\begin{align*}
\mathcal{L}^+(f)(x,\lambda):=\sum_{n\in\Zz}\frac{f(\lambda q^n)}{\theta_q(x^{-1}\lambda q^n)},\quad \mathcal{L}^-(f)(x):=\int_{|\xi|=r}f(\xi)\theta_q(x/\xi)\frac{d\xi}{\xi}. 
\end{align*}
From a simple calculation, we have
\begin{align}
\label{borel laplace trans}
&\mathcal{B}^\pm(x^mT_x^nf)=q^{\pm\frac{m(m-1)}{2}}\xi^mT_\xi^{n\pm m}\mathcal{B}^{\pm}(f),\quad \mathcal{L}^\pm(\xi^mT_\xi^nf)=q^{\mp\frac{m(m-1)}{2}}x^mT_x^{n\mp m}\mathcal{L}^\pm(f), 
\end{align}
hence
\begin{align*}
\mathcal{L}^\pm\circ\mathcal{B}^\pm(x^mT_x^nf)=x^mT_x^n\mathcal{L}^\pm\circ\mathcal{B}^\pm(f). 
\end{align*}

The Newton-Puiseux diagram of the $q$-difference equation 
\begin{align*}
a_n(x)f(xq^n)+a_{n-1}(x)f(xq^{n-1})+\cdots+a_0(x)f(x)=0
\end{align*}
is defined as the convex hull of 
\begin{align*}
\{(k,l)\in\Rz^2;\text{the coefficient of $x^k$ in $a_l(x)$ is not $0$}, 1\leq l\leq n\}.
\end{align*}
First, we compute the solutions around $x=0$ and $x=\infty$ of the following $q$-difference equation:
\begin{align}
\label{diver eq}
\left[T_x\prod_{k=1}^{m-1}(T_x-q^{\alpha_k})+x\prod_{k=1}^{m-1}(T_x-q^{\beta_k})\right]f(x)=0.
\end{align}
The Newton-Puiseux diagram of this $q$-difference equation is as follows: 
\begin{center}
\includegraphics[width=4cm]{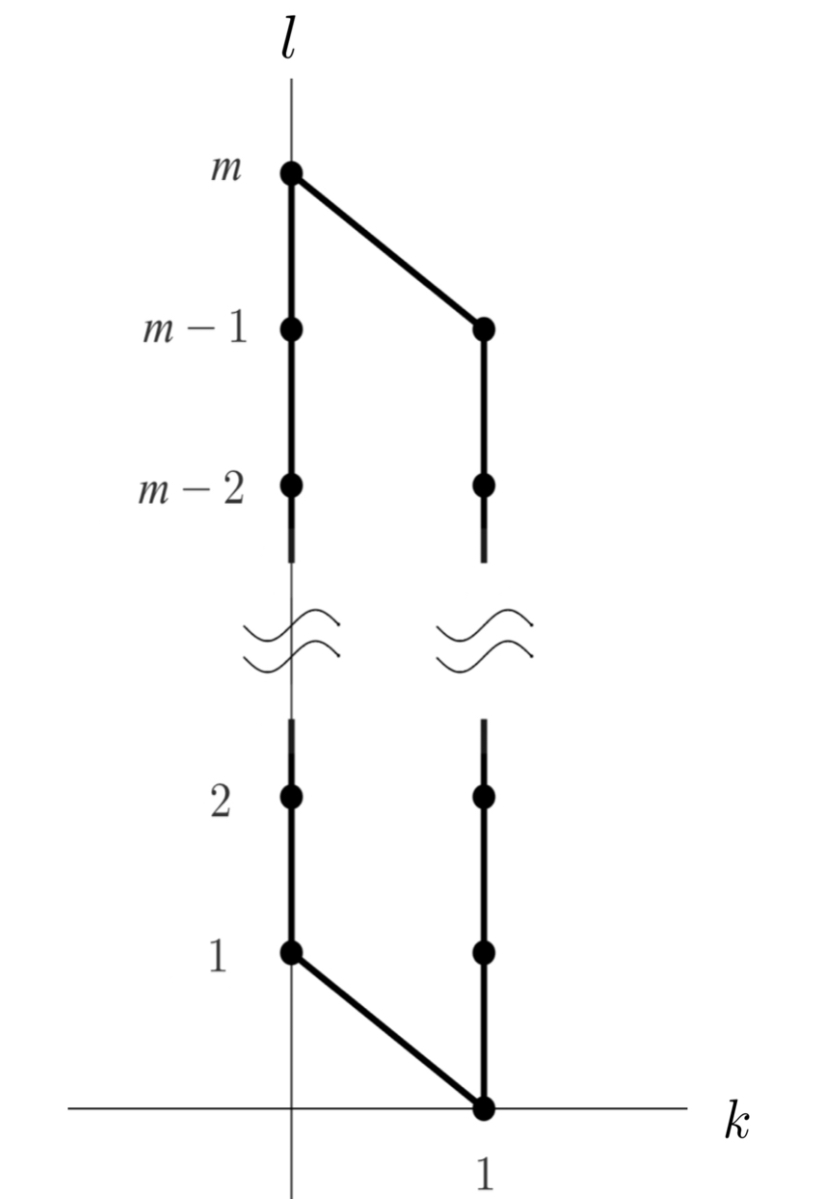}
\end{center}
\begin{lem}
\label{diver}
The $m$-th order linear $q$-difference equation $(\ref{diver eq})$
has formal solutions
\begin{align}
\label{0 hyper}
x^{\alpha_j}{}_m\phi_{m-2}\hyper{q^{\alpha_j-\beta_1},\ldots,q^{\alpha_j-\beta_{m-1}},0}{q^{\alpha_j-\alpha_1+1},\ldots,q^{\alpha_j-\alpha_{m-1}+1}}{q}{\frac{b_1}{a_1}xq^{-\alpha_j-1}}\quad (j=1,\ldots,m-1)
\end{align}
around $x=0$,
\begin{align}
\label{inf hyper}
x^{\beta_j}{}_m\phi_{m-2}\hyper{q^{\alpha_1-\beta_j},\ldots,q^{\alpha_{m-1}-\beta_j},0}{q^{\beta_1-\beta_j+1},\ldots,q^{\beta_{m-1}-\beta_j+1}}{q}{\frac{q^{m-1+\beta_j}}{x}}\quad (j=1,\ldots m-1)
\end{align}
around $x=\infty$, and convergent solutions
\begin{align}
\label{0 int}
&\frac{1}{\theta_q(-a_1^{-1}b_1x)}\int_{|\xi|=r}\prod_{k=1}^{m-1}\frac{(-a_1^{-1}b_1\xi q^{1-\alpha_k})_\infty}{(-a_1^{-1}b_1\xi q^{-\beta_k})_\infty}\theta_q\left(\frac{x}{\xi}\right)\frac{d\xi}{2\pi i\xi},\\
\label{inf int}
& \frac{1}{\theta_q(-xq^{-m})}\int_{|\xi|=r}\prod_{k=1}^{m-1}\frac{(-q^{m+\beta_k+1}/\xi)_\infty}{(-q^{m+\alpha_k}/\xi)_\infty}\theta_q\left(\frac{x}{\xi}\right)\frac{d\xi}{2\pi i\xi}.
\end{align}

\end{lem}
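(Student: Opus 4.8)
The plan is to treat the two families of solutions by two different mechanisms: the formal (divergent) series \eqref{0 hyper}, \eqref{inf hyper} by a direct indicial/recursion analysis, and the convergent integrals \eqref{0 int}, \eqref{inf int} by combining a theta gauge transformation with the $q$-Laplace intertwining \eqref{borel laplace trans}.

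For the formal solutions near $x=0$, I would substitute $f(x)=\sum_{n\ge0}c_nx^{n+\rho}$ into \eqref{diver eq}, using that $T_x$ acts on $x^{s}$ by $x^{s}\mapsto q^{s}x^{s}$ while the factor $x$ raises the exponent by one. Collecting the coefficient of $x^{n+\rho}$ gives
\[
c_n\,q^{n+\rho}\prod_{k=1}^{m-1}(q^{n+\rho}-q^{\alpha_k})+c_{n-1}\prod_{k=1}^{m-1}(q^{n-1+\rho}-q^{\beta_k})=0 .
\]
The $n=0$ relation is the indicial equation $q^{\rho}\prod_k(q^{\rho}-q^{\alpha_k})=0$, whose roots are $\rho=\alpha_1,\dots,\alpha_{m-1}$; for each $\rho=\alpha_j$ the remaining two–term recursion expresses $c_n/c_{n-1}$ as a ratio of the factors $(1-q^{n-1+\alpha_j-\beta_k})$ and $(1-q^{n+\alpha_j-\alpha_k})$ times a power of $q$. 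Rewriting these through $(a;q)_n/(a;q)_{n-1}=1-aq^{n-1}$, and noting that the $k=j$ denominator factor is exactly the standard $(q;q)_n$ (so that among the $m-1$ listed lower parameters one equals $q$, leaving $m-2$, consistent with ${}_m\phi_{m-2}$), the product telescopes into \eqref{0 hyper}; the extra factor $q^{-n(n-1)/2}$ that makes the series divergent is precisely the $s-r+1=-1$ weight in ${}_m\phi_{m-2}$. The expansion near $x=\infty$ is identical after substituting $f(x)=\sum_{n\ge0}c_nx^{\beta_j-n}$: now the \emph{second} term carries the top power of $x$, so the indicial equation is $\prod_k(q^{\rho}-q^{\beta_k})=0$ with roots $\beta_j$, and the analogous recursion yields \eqref{inf hyper}.

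For the convergent solution \eqref{0 int}, set $u(x)=1/\theta_q(-cx)$ with $c:=a_1^{-1}b_1$. From \eqref{theta trans} (case $n=1$, i.e.\ $x\theta_q(qx)=\theta_q(x)$) one gets the gauge relation $T_xu=-cx\,u$, so conjugation sends $T_x-q^{\alpha_k}\mapsto W-q^{\alpha_k}$ with $W:=-cxT_x$ and
\[
u^{-1}\!\left[T_x\textstyle\prod_k(T_x-q^{\alpha_k})+x\prod_k(T_x-q^{\beta_k})\right]\!u
 =W\textstyle\prod_k(W-q^{\alpha_k})+x\prod_k(W-q^{\beta_k}).
\]
Motivated by the Borel–Laplace method I would seek the remaining solution in the form $f=u\,\mathcal{L}^-(g)$ and reduce ``$f$ is a solution'' to a condition on $g$ by \eqref{borel laplace trans}: the identities $W\mathcal{L}^-(\psi)=\mathcal{L}^-(-c\xi\,\psi)$ and $x\mathcal{L}^-(\psi)=\mathcal{L}^-(\xi T_\xi^{-1}\psi)$ turn the conjugated operator into $\mathcal{L}^-$ of
\[
(-c\xi)\textstyle\prod_k(-c\xi-q^{\alpha_k})\,g(\xi)+\xi\prod_k(-c\xi q^{-1}-q^{\beta_k})\,g(\xi/q),
\]
so it suffices that this bracket vanish. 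For the infinite product $g(\xi)=\prod_k(-c\xi q^{1-\alpha_k})_\infty/(-c\xi q^{-\beta_k})_\infty$ the relation $(a)_\infty=(1-a)(aq)_\infty$ gives $g(\xi/q)/g(\xi)=\prod_k(1+c\xi q^{-\alpha_k})/(1+c\xi q^{-\beta_k-1})$; substituting this, the products $\prod_k(1+c\xi q^{-\beta_k-1})$ cancel and the bracket collapses to a constant multiple of $\prod_k(1+c\xi q^{-\alpha_k})$, which vanishes identically precisely when $c\prod_kq^{\alpha_k}=\prod_kq^{\beta_k}$, i.e.\ $c=a_1^{-1}b_1$. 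This both verifies \eqref{0 int} and pins down the constant. The solution \eqref{inf int} is obtained the same way, now gauging by $1/\theta_q(-xq^{-m})$ (for which \eqref{theta trans} gives $T_xv=-xq^{-m}v$) and solving the resulting first–order equation for a product in $1/\xi$; alternatively it follows from the $x\mapsto 1/x$, $\alpha\leftrightarrow\beta$ duality of \eqref{diver eq}.

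The main obstacle is the convergent part: the theta gauge is exactly what turns the $m$-th order operator into one whose $q$-Laplace companion is a \emph{first}-order functional equation in the Borel variable, solved by the stated infinite product; achieving the pairwise cancellation in the bracket and thereby reading off the forced value $c=a_1^{-1}b_1$ is the crux. What remains is analytic bookkeeping: checking that $\mathcal{L}^-(g)$ converges for a suitable radius $|\xi|=r$ chosen away from the zeros of $(-c\xi q^{-\beta_k})_\infty$, and that the intertwining relations \eqref{borel laplace trans} may be applied under the integral sign.
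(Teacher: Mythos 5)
Your proposal is correct in the parts you actually carry out, and it is essentially the paper's own proof: the formal solutions come from the two-term indicial recursion (this is what the paper compresses into ``direct calculations''), and the convergent solutions from a theta gauge combined with the $q$-Borel--Laplace intertwining (\ref{borel laplace trans}), which collapses the $m$-th order operator to a first-order $q$-difference equation in the Borel variable solved by an infinite product. Your conjugation $W=-cxT_x$ is just a tidier packaging of the paper's identity for $x^mT_x^n\bigl[\theta_q(cx)^{-1}\mathcal{L}^-(g)\bigr]$, and your $x=0$ computation, including the cancellation that forces $c=a_1^{-1}b_1$, matches the paper's line by line.

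The weak point is at $x=\infty$, which you assert rather than compute. Running your own scheme with the gauge $v=1/\theta_q(-xq^{-m})$, i.e. $W'=-q^{-m}xT_x$, the Borel-plane equation comes out as
\begin{align*}
\prod_{k=1}^{m-1}\left(1+\frac{q^{m+\alpha_k}}{\xi}\right)g(\xi)=q\prod_{k=1}^{m-1}\left(1+\frac{q^{m+\beta_k+1}}{\xi}\right)g\left(\frac{\xi}{q}\right),
\end{align*}
with an extra factor $q$ on the shifted side; its solution is therefore $\xi$ times the product appearing in (\ref{inf int}), not that product itself, so a careful execution of your method proves (\ref{inf int}) with the measure $d\xi/(2\pi i)$ in place of $d\xi/(2\pi i\xi)$. (Sanity check at $m=2$: writing $g(\xi)=\sum_{n\le1}g_n\xi^n$ and matching Laurent coefficients of the Borel-plane equation, any nonzero solution must have $g_1\ne0$, so $g$ cannot be a pure power series in $1/\xi$.) To be fair, this is a flaw you share with the paper rather than a defect of your method: the paper's own intermediate identity at $x=\infty$, with prefactor $(-1)^mq^{-m(m+1)}\xi^{m+1}$ and no factor $q$, is already inconsistent at $m=2$, where direct computation gives prefactor $q^{-4}\xi^{2}$ together with the factor $q$; your approach, executed honestly, actually detects this slip in the lemma. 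Finally, your fallback ``duality'' is imprecise: $x\mapsto\mathrm{const}/x$ maps (\ref{diver eq}) to the equation with parameters $(\alpha_k,\beta_k)$ replaced by $(-\beta_k,-\alpha_k)$, not $(\beta_k,\alpha_k)$; with the sign flip included it is a legitimate second route.
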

\begin{proof}
The formal solutions are obtained by direct calculations. We derive the convergent solution. Define coefficients $a_k, b_k$ by
\begin{align*}
\sum_{k=1}^ma_kz^k=z\prod_{k=1}^{m-1}(z-q^{\alpha_k}),\quad\sum_{k=1}^mb_kz^k=z\prod_{k=1}^{m-1}(z-q^{\beta_k}).
\end{align*}
From the equations (\ref{theta trans}) and (\ref{borel laplace trans}), we have
\begin{align*}
x^mT_x^n\frac{1}{\theta_q(cx)}\mathcal{L}^-(g(\xi))(x)&=
   c^nq^{-\frac{m(m+2n-1)}{2}}\frac{1}{\theta_q(cx)}\mathcal{L}^-(\xi^{m+n}T_\xi^{-m}g(\xi)), 
\end{align*}
hence we obtain
\begin{align*}
&\left[T_x\prod_{k=1}^{m-1}(T_x-q^{\alpha_k})+x\prod_{k=1}^{m-1}(T_x-q^{\beta_k})\right]\frac{1}{\theta_q(-a_1^{-1}b_1x)}\mathcal{L}^-(g(\xi)\\
&=\left[\sum_{k=1}^ma_kT_x^k+x\sum_{k=1}^mb_kT_x^{k-1}\right]\frac{1}{\theta_q(-a_1^{-1}b_1x)}\mathcal{L}^-(g(\xi))\\
&=-\frac{b_1}{\theta_q(-a_1^{-1}b_1x)}\mathcal{L}^-\left(\xi\left[\prod_{k=1}^{m-1}(1+a_1^{-1}b_1\xi q^{-\alpha_k})-\prod_{k=1}^{m-1}(1+a_1^{-1}b_1\xi q^{-\beta_k-1})T_\xi^{-1}\right]g(\xi)\right). 
\end{align*}
Since the $q$-difference equation
\begin{align*}
\left[\prod_{k=1}^{m-1}(1+a_1^{-1}b_1\xi q^{-\alpha_k})-\prod_{k=1}^{m-1}(1+a_1^{-1}b_1\xi q^{-\beta_k-1})T_\xi^{-1}\right]g(\xi)=0
\end{align*}
has a solution
\begin{align*}
g(\xi)=\prod_{k=1}^{m-1}\frac{(-a_1^{-1}b_1\xi q^{1-\alpha_k})_\infty}{(-a_1^{-1}b_1\xi q^{-\beta_k})_\infty}
\end{align*}
around $\xi=0$, we obtain the convergent solution (\ref{0 int}). 
By similar calculations, we also have
\begin{align*}
&\left[T_x\prod_{k=1}^{m-1}(T_x-q^{\alpha_k})+x\prod_{k=1}^{m-1}(T_x-q^{\beta_k})\right]\frac{1}{\theta_q(-xq^{-m})}\mathcal{L}^-(g(\xi))(x)\\
&=\left[\sum_{k=1}^ma_kT_x^k+xb_kT_x^{k-1}\right]\frac{1}{\theta_q(-xq^{-m})}\mathcal{L}^-(g(\xi))(x)\\
&=\frac{(-1)^mq^{-m(m+1)}}{\theta_q(-xq^{-m})}\mathcal{L}^-\left(\xi^{m+1}\left[\prod_{k=1}^{m-1}(1+q^{m+\alpha_k}/\xi)-\prod_{k=1}^{m-1}(1+q^{m+\beta_k+1}/\xi)T_x^{-1}\right]g(\xi)\right). 
\end{align*}
The $q$-difference equation
\begin{align*}
\left[\prod_{k=1}^{m-1}(1+q^{m+\alpha_k}/\xi)-\prod_{k=1}^{m-1}(1+q^{m+\beta_k+1}/\xi)T_x^{-1}\right]g(\xi)=0
\end{align*}
has a solution 
\begin{align*}
g(\xi)=\prod_{k=1}^{m-1}\frac{(-q^{m+\beta_k+1}/\xi)_\infty}{(-q^{m+\alpha_k}/\xi)_\infty}
\end{align*}
around $x=\infty$. Hence we obtain the convergent solution (\ref{inf int}). 
\end{proof}

\section{Proof of the main results}\label{section 4}
In this section, we prove Theorem \ref{mu eq}, Theorem \ref{Appell eq} and their corollaries. 

\begin{proof}[Proof of Theorem $\ref{mu eq}$]
Specializing as $\beta_k=\alpha_k-1$ and rescaling as $x\mapsto xq^{m-1}$ in Lemma \ref{diver}, the $q$-difference equation $(\ref{diver eq})$ becomes 
\begin{align}
&\left[T_x\prod_{k=1}^{m-1}(T_x-q^{\alpha_k})+x\prod_{k=1}^{m-1}(qT_x-q^{\alpha_k})\right]f(xq^{m-1})\nonumber\\
\label{alpha=0}
&=\left[\prod_{k=1}^{m-1}(T_x-q^{\alpha_k})\right](T_x+x)f(xq^{m-1})=0,
\end{align}
and the convergent and formal solutions becomes
\begin{align*}
\frac{1}{\theta_q(-x)},\quad x^{\alpha_j}{}_2\phi_0\hyper{q,0}{-}{q}{xq^{-\alpha_j-1}}\quad (j=1,\ldots,m-1),
\end{align*}
respectively. Applying the composition of the $q$-Borel and $q$-Laplace transformations $\mathcal{L}^+\circ\mathcal{B}^+$ on the formal solutions, we obtain
\begin{align*}
x^{\alpha_j}\mathcal{L}^+\circ\mathcal{B}^+\left({}_2\phi_0\hyper{q,0}{-}{q}{xq^{-\alpha_j-1}}\right)(x,-\lambda^{-1})&=x^{\alpha_j}\mathcal{L}^+\left({}_1\hyper{q}{-}{q}{-\xi q^{-\alpha_j-1}}\right)(x,-\lambda^{-1})\\
   &=x^{\alpha_j}\mathcal{L}^+\left(\frac{1}{1+\xi q^{-\alpha_j-1}}\right)(x,-\lambda^{-1})\\
   &=-\frac{\lambda^{-1} x^{\alpha_j-1}}{\theta_q(-1/x\lambda)}\sum_{n\in\Zz}\frac{(-1)^n(1/x\lambda)^nq^\frac{n(n+1)}{2}}{1-\lambda^{-1} q^{n-\alpha_j}}\\
   &=iq^\frac{1}{8}\frac{x^{\alpha_j-\frac{1}{2}}}{\sqrt{\lambda}}\mu\left(\frac{1}{\lambda q^{\alpha_j}},\frac{1}{x\lambda};q\right). 
\end{align*}
From the symmetry of the $\mu$-function $\mu(x,y)=\mu(x^{-1},y^{-1})=\mu(y,x)$ (cf. \cite[Proposition 1.4]{Zw1}) , we have
\begin{align*}
x^{\alpha_j}\mathcal{L}^+\circ\mathcal{B}^+\left({}_2\phi_0\hyper{q,0}{-}{q}{xq^{-\alpha_j-1}}\right)(x,-\lambda^{-1})=iq^\frac{1}{8}\frac{x^{\alpha_j-\frac{1}{2}}}{\sqrt{\lambda}}\mu\left(x\lambda,\lambda q^{\alpha_j};q\right).
\end{align*}

Therefore,  
\begin{align*}
\frac{1}{\theta_q(-x)},\quad x^{\alpha_j-\frac{1}{2}}\mu(x\lambda,\lambda q^{\alpha_j};q)\quad (j=1,\ldots,m-1)
\end{align*}
are convergent solutions of the $q$-difference equation $(\ref{alpha=0})$. 
Also, let $x\mapsto xq^{\alpha}$, we have the fundamental solutions (\ref{basic solution}) of $q$-difference equation $(\ref{linear eq})$. 
\end{proof}
\begin{coro}
\label{non-homogeneous eq}
A function $f(x)$ that satisfies a first order $q$-difference equation 
\begin{align}
\label{tukareta}
(T_x+x)f(x)=\sum_{j=1}^{m-1}P_j(x)
\end{align}
where $P_j(x)$ is a quasi-periodic function $P_j(xq)=q^{\alpha_j}P_j(x)$ is expressed as a linear combination of the functions 
\begin{align*}
\frac{1}{\theta_q(-x)},\quad x^{\alpha_j-\frac{1}{2}}\mu(x\lambda,\lambda q^{\alpha_j};q)\quad (j=1,\ldots,m-1).
\end{align*}
\end{coro}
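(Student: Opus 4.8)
The plan is to reduce the inhomogeneous first-order equation (\ref{tukareta}) to the homogeneous equation (\ref{alpha=0}) already solved in Theorem \ref{mu eq}, by producing a $q$-difference operator that annihilates the entire right-hand side. The observation driving the argument is that each quasi-periodic forcing term is an eigenfunction of $T_x$: from $P_j(xq)=q^{\alpha_j}P_j(x)$ we read off $T_xP_j=q^{\alpha_j}P_j$, that is, $(T_x-q^{\alpha_j})P_j=0$. Thus the factors appearing in the homogeneous operator are precisely tailored to kill the $P_j$.

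First I would record that the operators $T_x-q^{\alpha_k}$ mutually commute, each being a polynomial in the single operator $T_x$. Hence for every fixed $j$ I may reorder the product so that $T_x-q^{\alpha_j}$ acts first and write $\prod_{k=1}^{m-1}(T_x-q^{\alpha_k})=\bigl[\prod_{k\neq j}(T_x-q^{\alpha_k})\bigr](T_x-q^{\alpha_j})$, which applied to $P_j$ gives $0$. Summing over $j$ then shows $\bigl[\prod_{k=1}^{m-1}(T_x-q^{\alpha_k})\bigr]\sum_{j=1}^{m-1}P_j(x)=0$.

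Next I would apply this operator to both sides of (\ref{tukareta}). The right-hand side vanishes by the previous step, leaving $\bigl[\prod_{k=1}^{m-1}(T_x-q^{\alpha_k})\bigr](T_x+x)f(x)=0$, which is exactly the homogeneous equation (\ref{alpha=0}). Invoking Theorem \ref{mu eq} in the normalized case $\alpha=0$, whose fundamental system is $1/\theta_q(-x)$ together with $x^{\alpha_j-\frac{1}{2}}\mu(x\lambda,\lambda q^{\alpha_j};q)$ for $j=1,\ldots,m-1$, then expresses $f$ as a linear combination of exactly these functions, as claimed.

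The step requiring the most care is interpretive rather than computational: the passage from "$f$ solves the homogeneous equation" to "$f$ lies in the span of the listed functions" relies on Theorem \ref{mu eq} providing a \emph{complete} fundamental system for the $m$-th order equation. One should therefore be satisfied that the $m$ solutions are linearly independent and exhaust the solution space over the relevant field of $q$-periodic constants, so that mere membership in the solution space forces the stated representation. The reduction does discard the inhomogeneous data, but this is harmless, since the corollary asserts only membership in the span and not a reconstruction of the individual forcing terms $P_j$.
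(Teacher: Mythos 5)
Your proof is correct and follows essentially the same route as the paper: apply the operator $\prod_{k=1}^{m-1}(T_x-q^{\alpha_k})$ to both sides of (\ref{tukareta}), observe that each factor annihilates the corresponding quasi-periodic term $P_j$, and conclude from the fundamental system of Theorem \ref{mu eq}. The only difference is expository — you spell out the commutativity/eigenfunction argument and the completeness caveat that the paper leaves implicit.
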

\begin{proof}
From the equation (\ref{tukareta}), we have 
\begin{align*}
\left[\prod_{k=1}^{m-1}(T_x-q^{\alpha_k})\right](T_x+x)f(x)=\left[\prod_{k=1}^{m-1}(T_x-q^{\alpha_k})\right]\sum_{k=1}^{m-1}P_k(x)=0. 
\end{align*}
Hence the function $f(x)$ is expressed as  a linear combination of fundamental solutions (\ref{basic solution}), since $f(x)$ satisfies the $q$-difference equation (\ref{linear eq}). 
\end{proof}


\begin{lem}
\label{Am quasi-perio}
Let $G_m(x,y):=x^{-\frac{m}{2}}A_{m}(x,(-1)^{m-1}y)$, the function $G_m$ satisfies the following pseudo-periodicity: 
\begin{align}
\label{Gm quasi-perio}
yG_m(xq,y)+x^mG_m(x,y)+\sum_{k=0}^{m-1}x^k\theta_{q^m}(-yq^k)=0. 
\end{align}
\end{lem}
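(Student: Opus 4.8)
The plan is to reduce $G_m$ to a single clean bilateral series and then verify $(\ref{Gm quasi-perio})$ by a direct term-by-term manipulation. First I would unwind the definitions: substituting $y\mapsto(-1)^{m-1}y$ into $A_m$ and using $(-1)^{mn}(-1)^{(m-1)n}=(-1)^{(2m-1)n}=(-1)^n$, the prefactor $x^{m/2}$ cancels against the $x^{-m/2}$ in the definition of $G_m$, leaving the simple expression
\begin{align*}
G_m(x,y)=\sum_{n\in\Zz}\frac{(-1)^n y^n q^{mn(n+1)/2}}{1-xq^n}.
\end{align*}
All the subsequent bookkeeping is carried out on this form.

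Next I would write out the two series appearing in the identity. In $yG_m(xq,y)$ the denominators are $1-xq^{n+1}$, so I shift the summation index $n\mapsto n-1$ to bring them to $1-xq^n$, matching the denominators of $x^mG_m(x,y)$; this shift introduces a sign and replaces the exponent $\tfrac{mn(n+1)}{2}$ by $\tfrac{mn(n-1)}{2}$. Adding the two series over the common denominator $1-xq^n$, I would use the fact that the two relevant $q$-exponents differ by exactly $mn$, since $\tfrac{mn(n+1)}{2}-\tfrac{mn(n-1)}{2}=mn$, to factor each numerator as $q^{mn(n-1)/2}\bigl((xq^n)^m-1\bigr)$.

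The step that carries the real content is the cancellation of the apparent pole at $xq^n=1$. Writing $(xq^n)^m-1=-(1-xq^n)\sum_{k=0}^{m-1}(xq^n)^k$, the factor $1-xq^n$ cancels the denominator, leaving a genuinely pole-free bilateral series. Interchanging the finite $k$-sum with the bilateral $n$-sum, each inner sum becomes $\sum_{n\in\Zz}(-yq^k)^n (q^m)^{n(n-1)/2}=\theta_{q^m}(-yq^k)$ by the definition of the theta function in base $q^m$. Collecting the terms yields $yG_m(xq,y)+x^mG_m(x,y)=-\sum_{k=0}^{m-1}x^k\theta_{q^m}(-yq^k)$, which is exactly $(\ref{Gm quasi-perio})$.

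The only genuine obstacle is formal rather than conceptual: one must track the sign and the shifted $q$-exponent carefully through the index change, and confirm that after the geometric-sum cancellation the rearrangement is legitimate term by term. This causes no convergence subtlety, since after cancellation every summand is entire in $x$ and the pole structure of the original series has been entirely absorbed. No appeal to the earlier lemmas or transforms is needed; the identity is self-contained.
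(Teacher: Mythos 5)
Your proof is correct and follows essentially the same route as the paper: the paper shifts the index in $G_m(xq,y)$ and inserts $1=x^mq^{mn}+(1-x^mq^{mn})$, which is precisely your step of combining the two series over the common denominator $1-xq^n$ and factoring the numerator as $q^{mn(n-1)/2}\bigl((xq^n)^m-1\bigr)$. Both arguments then cancel the pole via the finite geometric sum $\sum_{k=0}^{m-1}(xq^n)^k$, swap the sums, and recognize each inner bilateral sum as $\theta_{q^m}(-yq^k)$.
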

\begin{proof}
The result is obtained by the following calculation. 
\begin{align*}
G_m(xq,y)
   &=\sum_{n\in\Zz}\frac{(-1)^{n-1}y^{n-1}q^\frac{mn(n-1)}{2}}{1-xq^n}(x^mq^{mn}+1-x^mq^{mn})\\
   &=-\frac{x^m}{y}G_m(x,y)+\sum_{n\in\Zz}(-y)^{n-1}q^\frac{mn(n-1)}{2}\sum_{k=0}^{m-1}x^kq^{kn}\\
   &=-\frac{x^m}{y}G_m(x,y)-\frac{1}{y}\sum_{k=0}^{m-1}x^k\sum_{n\in\Zz}\left(-yq^k\right)^nq^\frac{mn(n-1)}{2}\\
   &=-\frac{x^m}{y}G_m(x,y)-\frac{1}{y}\sum_{k=0}^{m-1}x^k\theta_{q^m}(-yq^k). 
\end{align*}
\end{proof}
\begin{proof}[Proof of Theorem $\ref{Appell eq}$]
From the equation (\ref{Gm quasi-perio}), 
we have 
\begin{align*}
&\left[\prod_{k=0}^{m-1}(T_x-q^k)\right]\left(yG_m(xq,y)+x^mG_m(x,y)+\sum_{j=0}^{m-1}x^j\theta_{q^m}(-yq^j)\right)\\
   &=\left[\prod_{k=0}^{m-1}(T_x-q^k)\right](yT_x+x^m)G_m(x,y)+\left[\prod_{k=0}^{m-1}(T_x-q^k)\right]\sum_{j=0}^{m-1}x^j\theta_{q^m}(-yq^j)\\
   &=\left[\prod_{k=0}^{m-1}(T_x-q^k)\right](yT_x+x^m)G_m(x,y)=0.
   \end{align*}
\end{proof}
\begin{proof}[Another Proof of the equation $(\ref{Z})$]
From Corollary \ref{non-homogeneous eq}, $G_m(x,y)$ is represented by a linear combination of 
\begin{align*}
\frac{1}{\theta_{q^m}(-x^m/y)},\quad x^{k-\frac{m}{2}}\mu(x^m\lambda,y\lambda q^k;q^m)\quad (k=0,\ldots,m-1). 
\end{align*}
We put
\begin{align*}
G_m(x,y)=\frac{C_(x)}{\theta_{q^m}(-x^m/y)}+\sum_{k=0}^{m-1}C_k(x)x^{k-\frac{m}{2}}\mu(x^m\lambda,y\lambda q^k;q^m).
\end{align*}
From the pseudo-periodicity of theta function (\ref{theta trans}), the $\mu$-function (cf. \cite[Proposition 1.4]{Zw1})
\begin{align*}
\mu(xq,y)=-\frac{x}{y}q^\frac{1}{2}\mu(x,y)-i\sqrt{\frac{x}{y}}q^\frac{3}{8}
\end{align*}
and the equation (\ref{Gm quasi-perio}), we have 
\begin{align*}
yG_m(xq,y)+x^mG_m(x,y)=-i\sqrt{y}q^{-\frac{m}{8}}\sum_{k=0}^{m-1}C_k(x)x^kq^\frac{k}{2}=-\sum_{k=0}^{m-1}x^k\theta_{q^m}(-yq^k). 
\end{align*}
Namely, 
\begin{align*}
C_k(x)=-iq^\frac{m}{8}\frac{\theta_{q^m}(-yq^k)}{\sqrt{yq^k}}. 
\end{align*}
Thus we have 
\begin{align*}
C(x)=\theta_{q^m}(-x^m/y)\left\{G_m(x,y)+iq^\frac{m}{8}\sum_{k=0}^{m-1}\frac{\theta_{q^m}(-yq^k)}{\sqrt{yq^k}}x^{k-\frac{m}{2}}\mu(x^m\lambda,y\lambda q^k;q^m)\right\}. 
\end{align*}
If $\lambda=1$, the right-hand side is an elliptic function with a simple pole at $x=1$, so $C(x)$ is a constant function. Thus, for $\lambda=1$, we have 
\begin{align*}
C(x)=&\lim_{x\to {}^m\sqrt{y}}\theta_{q^m}(-x^m/y)\left\{G_m(x,y)+iq^\frac{m}{8}\sum_{k=0}^{m-1}\frac{\theta_{q^m}(-yq^k)}{\sqrt{yq^k}}x^{k-\frac{m}{2}}\mu(x^m,yq^k;q^m)\right\}\\
   &=0. 
\end{align*}
Therefore, we obtain
\begin{align}
\label{Gm mu}
G_m(x,y)=-iq^\frac{m}{8}\sum_{k=0}^{m-1}\frac{\theta_{q^m}(-yq^k)}{\sqrt{yq^k}}x^{k-\frac{m}{2}}\mu(x^m,yq^k;q^m). 
\end{align} 
\end{proof}

For $G_m(x,1)$, the following corollary holds. 
\begin{coro}\ \\
\label{Gm coro}
(A) The function $G_m(x,1)$ satisfies the following pseudo-periodicity: 
\begin{align*}
G_m(xq,1)+x^mG_m(x,1)+\sum_{k=1}^{m-1}x^k\theta_q(-q^k)=0.
\end{align*}
(B) The function $G_m(x,1)$ satisfies the following $q$-difference equation: 
\begin{align}
\label{Gm1 eq}
\left[\prod_{k=1}^{m-1}(T_x-q^k)\right](T_x+x^m)G_m(x,1)=0.
\end{align}
(C) The $q$-difference equation $(\ref{Gm1 eq})$ has the following fundamental solution: 
\begin{align*}
\frac{1}{\theta_{q^m}(-x^m)},\quad x^{j-\frac{m}{2}}\mu(x^m\lambda,\lambda q^j;q^m)\quad(j=1,\ldots,m-1), 
\end{align*}
and the function $G_m(x,1)$ is represented as follows:
\begin{align*}
G_m(x,1)=\frac{(q^m;q^m)_\infty^3}{\theta_{q^m}(-x^m)}-\sum_{j=1}^{m-1}i\theta_{q^m}(-q^j)x^{j-\frac{m}{2}}q^{\frac{m}{8}-\frac{j}{2}}\mu(x^m,q^j;q^m). 
\end{align*}
(D) The representation of $G_m(x,1)$ by the fundamental solutions of $(\ref{Gm1 eq})$ is as follows:
\begin{align}
G_m(x,1)&=-iq^\frac{m}{8}\sum_{j=1}^{m-1}\theta_{q^m}(-q^j)x^{j-\frac{m}{2}}q^{-\frac{j}{2}}\mu(x^m\lambda,\lambda q^j;q^m)\nonumber\\
\label{Gm mu trans}
   &\qquad+\frac{(q^m;q^m)_\infty^3}{\theta_{q^m}(-x^m)}\sum_{j=0}^{m-1}\frac{x^j\theta_{q^m}(-\lambda)\theta_{q^m}(-x^m\lambda q^j)}{\theta_{q^m}(-x^m\lambda)\theta_{q^m}(-\lambda q^j)}. 
\end{align}
(E) If $f(x)$ is a solution of the $q$-difference equation $(\ref{Gm1 eq})$, $f(q/x)$ is also a solution. 
\end{coro}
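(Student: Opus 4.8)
The five claims split naturally: I would prove (A)$\Rightarrow$(B)$\Rightarrow$(C)$\Rightarrow$(D) in sequence, and handle (E) separately by an operator symmetry.

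\textbf{Parts (A) and (B).} The plan is to specialize the pseudo-periodicity \eqref{Gm quasi-perio} of Lemma \ref{Am quasi-perio} at $y=1$. Since $\theta_{q^m}(-1)=0$ (the factor $(1;q^m)_\infty$ vanishes), the $k=0$ term of the inhomogeneous part disappears, leaving exactly the relation in (A) with the sum running from $k=1$ to $m-1$. For (B) I would apply $\prod_{k=1}^{m-1}(T_x-q^k)$ to (A): because $T_xx^k=q^kx^k$, each monomial $x^k$ with $1\le k\le m-1$ is killed by its matching factor $(T_x-q^k)$, so the inhomogeneous polynomial is annihilated and \eqref{Gm1 eq} follows. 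This is the device already used in Corollary \ref{non-homogeneous eq} and in the proof of Theorem \ref{Appell eq}; the order drops from $m+1$ to $m$ precisely because the $y=1$ specialization removes the constant term, so the factor $(T_x-1)$ becomes unnecessary.

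\textbf{Part (C).} Since Theorem \ref{mu eq} holds for any base, substituting $q\mapsto q^m$, $x\mapsto x^m$, $\alpha=0$ and $\alpha_j=j/m$ (so $(q^m)^{\alpha_j}=q^j$) turns \eqref{linear eq} into \eqref{Gm1 eq} and \eqref{basic solution} into the listed fundamental solutions $1/\theta_{q^m}(-x^m)$ and $x^{j-m/2}\mu(x^m\lambda,\lambda q^j;q^m)$. For the explicit formula I would specialize \eqref{Gm mu} at $y=1$: the terms $k=1,\dots,m-1$ pass to the limit directly (there $q^k\neq(q^m)^n$, so the $\mu$'s stay regular), while the $k=0$ term is an indeterminate $0\cdot\infty$. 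I would resolve it using the cancellation $\frac{\theta_{q^m}(-y)}{\sqrt{y}}\mu(x^m,y;q^m)=iq^{-m/8}x^{m/2}\sum_{n}\frac{(-1)^ny^nq^{mn(n+1)/2}}{1-x^mq^{mn}}$, followed by the classical identity $\sum_{n\in\Zz}\frac{(-1)^nq^{mn(n+1)/2}}{1-x^mq^{mn}}=\frac{(q^m;q^m)_\infty^3}{\theta_{q^m}(-x^m)}$. The latter I would establish by noting that its left side and $1/\theta_{q^m}(-x^m)$ share the quasi-periodicity $F((xq)^m)=-x^mF(x^m)$ (again using $\theta_{q^m}(-1)=0$) and the same simple poles, so their ratio is elliptic and holomorphic, hence constant, the value $(q^m;q^m)_\infty^3$ being read off from the residue at $x^m=1$.

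\textbf{Part (D).} This is where I expect the main difficulty. Mirroring the alternative proof of \eqref{Z}, I would write $G_m(x,1)=\frac{C(x)}{\theta_{q^m}(-x^m)}+\sum_{j=1}^{m-1}C_j(x)\,x^{j-m/2}\mu(x^m\lambda,\lambda q^j;q^m)$ and pin down the pseudo-constants $C_j$ by substituting into (A) and using \eqref{theta trans} together with $\mu(xq,y)=-\frac{x}{y}q^{1/2}\mu(x,y)-i\sqrt{x/y}q^{3/8}$; matching coefficients of $x^k$ forces $C_j=-iq^{m/8}\theta_{q^m}(-q^j)q^{-j/2}$, exactly the coefficients in \eqref{Gm mu trans}. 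The hard point is the coefficient $C(x)$: for general $\lambda$ it is no longer constant (as it was for $\lambda=1$) but a genuine elliptic function, and identifying it with the stated theta-quotient sum requires an explicit evaluation of $\mu(x^m,q^j;q^m)-\mu(x^m\lambda,\lambda q^j;q^m)$ as a ratio of theta functions, i.e. Zwegers' transformation of $\mu$ under simultaneous scaling of its arguments. I would obtain it by observing that $C(x)$ is elliptic with simple poles only at the zeros of $\theta_{q^m}(-x^m\lambda)$, matching its residues against the theta-quotient ansatz, and fixing the residual constant by letting $\lambda\to1$ and comparing with (C).

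\textbf{Part (E).} I would argue by conjugating the operator. Let $(Sf)(x)=f(q/x)$ (so $S^2=\mathrm{id}$) and $L=\bigl[\prod_{k=1}^{m-1}(T_x-q^k)\bigr](T_x+x^m)$. A direct check gives $ST_xS=T_x^{-1}$ and $Sx^mS=q^mx^{-m}$, hence $\tilde L:=SLS=\bigl[\prod_{k=1}^{m-1}(T_x^{-1}-q^k)\bigr](T_x^{-1}+q^mx^{-m})$. Using only $T_xx^{-m}=q^{-m}x^{-m}T_x$ and the commutativity of the constant-coefficient factors, I would push $T_x^m$ through $\tilde L$ to obtain the operator identity $T_x^m\tilde L=(-1)^{m-1}q^{-m(m-1)/2}x^{-m}L$ (the product $\prod_k(T_x-q^k)$ reappears after reindexing $k\mapsto m-k$). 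Therefore $Lf=0$ gives $T_x^m\tilde Lf=0$, and injectivity of $T_x^m$ yields $\tilde Lf=0$; finally $L(Sf)=(LS)f=(S\tilde L)f=S(\tilde Lf)=0$, so $f(q/x)$ solves \eqref{Gm1 eq}.
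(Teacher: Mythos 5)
Your proposal is correct, and on four of the five parts it is essentially the paper's own proof. For (A) and (B) you argue exactly as the paper does (specialize Lemma \ref{Am quasi-perio} at $y=1$, then annihilate the quasi-periodic inhomogeneity), and you are in fact more careful than the paper in pointing out that $\theta_{q^m}(-1)=0$ is what removes the $k=0$ term and lowers the order from $m+1$ to $m$. For (C) you also match the paper: fundamental solutions from Corollary \ref{non-homogeneous eq} (i.e.\ Theorem \ref{mu eq} in base $q^m$ with $\alpha_j=j/m$), and the explicit formula by letting $y\to1$ in (\ref{Gm mu}); the paper states this limit without comment, whereas you correctly resolve the $0\cdot\infty$ indeterminacy of the $k=0$ term via the classical identity $\sum_{n\in\Zz}(-1)^nq^{mn(n+1)/2}/(1-x^mq^{mn})=(q^m;q^m)_\infty^3/\theta_{q^m}(-x^m)$, whose ellipticity proof you sketch is sound. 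Your (E) is the paper's substitution $x=1/(yq^{m-1})$ rewritten as conjugation by the involution $S$; your operator identity $T_x^m\tilde L=(-1)^{m-1}q^{-\frac{m(m-1)}{2}}x^{-m}L$ reproduces the paper's constant exactly, so this is the same computation in different clothing. The only genuine divergence is (D), which you flag as the main difficulty but which the paper dispatches in one line: it quotes Zwegers' translation formula \cite[Proposition 1.4]{Zw1},
\begin{align*}
iq^\frac{1}{8}\mu(xz,yz;q)=iq^\frac{1}{8}\mu(x,y;q)+\frac{\sqrt{xy}(q)_\infty^3\theta_q(-z)\theta_q(-xyz)}{\theta_q(-x)\theta_q(-y)\theta_q(-xz)\theta_q(-yz)},
\end{align*}
and applies it termwise to the $\lambda=1$ representation already established in (C); the $j=0$ term of the theta-quotient sum in (\ref{Gm mu trans}) is just the constant $1$ absorbed from the $(q^m;q^m)_\infty^3/\theta_{q^m}(-x^m)$ term of (C). Your alternative --- an ansatz with unknown pseudo-constants, residue matching for the elliptic coefficient $C(x)$, and a $\lambda\to1$ comparison with (C) --- would work, but it amounts to re-deriving Zwegers' translation formula by elliptic-function theory; this is precisely the tool you name but do not realize is citable (indeed the paper had already used it in the alternative proof of (\ref{Z})). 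The paper's route buys a one-step proof at the cost of invoking \cite{Zw1}; yours is self-contained but substantially longer, and its residue-matching step is the one place in your proposal where details remain genuinely unchecked.
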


\begin{proof}
\ \\
(A) We obtain with $y=1$ in Lemma \ref{Am quasi-perio}. \\
(B) From the equation (A) of Corollary \ref{Gm coro}, we have
\begin{align*}
\left[\prod_{k=1}^{m-1}(T_x-q^k)\right](T_x+x^m)G_m(x,1)=-\left[\prod_{k=1}^{m-1}(T_x-q^k)\right]\sum_{j=1}^{m-1}x^j\theta_q(-q^k)=0. 
\end{align*}
(C) From Corollary \ref{non-homogeneous eq}, the $q$-difference equation $(\ref{Gm1 eq})$ has the fundamental solutions
\begin{align*}
\frac{1}{\theta_{q^m}(-x^m)},\quad x^{j-\frac{m}{2}}\mu(x^m\lambda,\lambda q^j;q^m),\quad(j=1,\ldots,m-1). 
\end{align*}
Also, if $y\to 1$ in the equation $(\ref{Gm mu})$, we have 
\begin{align*}
G_m(x,1)=\frac{(q^m;q^m)_\infty^3}{\theta_{q^m}(-x^m)}-\sum_{j=1}^{m-1}i\theta_{q^m}(-q^j)x^{j-\frac{m}{2}}q^{\frac{m}{8}-\frac{j}{2}}\mu(x^m,q^j;q^m). 
\end{align*}
(D) From the translation formula of the $\mu$-function (cf. \cite[Proposition 1.4]{Zw1})
\begin{align*}
iq^\frac{1}{8}\mu(xz,yz; q)=iq^\frac{1}{8}\mu(x,y; q)+\frac{\sqrt{xy}(q)_\infty^3\theta_q(-z)\theta_q(-xyz)}{\theta_q(-x)\theta_q(-y)\theta_q(-xz)\theta_q(-yz)}, 
\end{align*}
we immediately obtain the equation (\ref{Gm mu trans}). \\
(E) We put 
\begin{align*}
\prod_{k=1}^{m-1}(T_x-q^k)=\sum_{k=1}^m a_kT_x^{k-1}, 
\end{align*}
and let $f(x)$ be a solution of the $q$-difference equation $(\ref{Gm1 eq})$. 
Thus if $x=1/yq^{m-1}$ in the $q$-difference equation $(\ref{Gm1 eq})$, we have 
\begin{align*}
      \left[\prod_{k=1}^{m-1}(T_x-q^k)\right](T_x+x^m)f(x)=\sum_{k=1}^m a_kf\left(\frac{q}{yq^{m-k}}\right)+a_k\frac{q^{mk-m^2}}{y^m}f\left(\frac{q}{yq^{m-k+1}}\right)=0. 
\end{align*}
Therefore, because
\begin{align*}
\sum_{k=1}^ma_kq^{mk-m^2}T_y^{m-k}(T_y+y^m)f\left(\frac{q}{y}\right)=(-1)^{m-1}q^{-\frac{m(m-1)}{2}}\prod_{k=1}^{m-1}(T_y-q^k)(T_y+y^m)f\left(\frac{q}{y}\right)=0, 
\end{align*}

$f(q/x)$ is also a solution of $q$-difference equation $(\ref{Gm1 eq})$. 
\end{proof}

\begin{rmk}\label{rmk2}
From Appell-Lerch-type sums of the universal mock theta functions \cite{GM}
\begin{align*}
g_2(x;q)&=\frac{(-q)_\infty}{(q)_\infty}\sum_{n\in\Zz}\frac{(-1)^nq^{n(n+1)}}{1-xq^n},\quad
g_3(x;q)=\frac{1}{(q)_\infty}\sum_{n\in\Zz}\frac{(-1)^nq^\frac{3n(n+1)}{2}}{1-xq^n},
\end{align*}
$g_2,g_3$ are expressed as 
\begin{align*}
g_2(x;q)=\frac{(-q)_\infty}{(q)_\infty}G_2(x,1),\quad g_3(x;q)=\frac{1}{(q)_\infty}G_3(x,1), 
\end{align*}
respectively. 
We immediately obtain the $q$-difference equation  satisfied by the universal mock theta functions $g_2,g_3$ and its properties from the results $(A),(B),(E)$ of Corollary  $\ref{Gm coro}$. 
Furthermore, the proof of the equation $(C)$ of Corollary $\ref{Gm coro}$ gives another proof of the equation $(\ref{K})$. 
\end{rmk}

\medskip
\begin{flushleft}
Satoshi Tsuchimi\\
Department of Mathematics\\
Graduate School of Science\\ 
Kobe University\\
1-1, Rokkodai, Nada-ku\\ 
Kobe, 657-8501\\
JAPAN\\
183s014s@stu.kobe-u.ac.jp
\end{flushleft}

\end{document}